\newcommand{\Rcal}{{\cal R}}
\newcommand{\seq}{\vdash}
\begin{document}

\title{Schematic Cut elimination and the Ordered Pigeonhole Principle \subtitle{[Extended Version]} }

\author{David Cerna \inst{1} \and Alexander Leitsch \inst{2}}
\institute{Research Institute for Symbolic
Computation (RISC) \\ Johannes Kepler University, Linz, Austria \\ \href{mailto:dcerna@risc.uni-linz.ac.at}{dcerna@risc.uni-linz.ac.at}
\and
Logic and Theory Group\\ Technical University of Vienna \\  \href{mailto: leitsch@logic.at}{ leitsch@logic.at} }

\authorrunning{D.\,M. Cerna & A. Leitsch} %mandatory. First: Use abbreviated first/middle names. Second (only in severe cases): Use first author plus 'et. al.'

\maketitle

\begin{abstract}
In previous work, an attempt was made to apply the {\em schematic CERES method} \cite{CERESS2} to a formal proof with an arbitrary number of $\Pi_{2}$ cuts (a recursive proof encapsulating the infinitary pigeonhole principle) \cite{MeCadePaper2015}. However the derived schematic refutation for the {\em characteristic clause set} of the proof  could not be expressed in the formal language provided in \cite{CERESS2}.  Without this formalization a {\em Herbrand system} cannot be algorithmically extracted. In this work, we provide a restriction of the proof found in \cite{MeCadePaper2015}, the {\em ECA-schema (Eventually Constant Assertion), or ordered infinitary pigeonhole principle}, whose analysis can be completely carried out in the framework of \cite{CERESS2}, this is the first time the framework is used for proof analysis. From the refutation of the clause set and a substitution schema we construct a {\em Herbrand system}. 

%The differences between the ECA-schema and the proof of \cite{MeCadePaper2015} elude to the issues of schematic proof analysis in the framework of \cite{CERESS2}. 
 \end{abstract}

\section{Introduction}

For his famous {\em Hauptsatz}~\cite{Gentzen1935}, Gerhard Gentzen developed the sequent calculus \textbf{LK}. Gentzen went on to show that the {\em cut} inference rule is redundant and in doing so, was able to show several results on consistency and decidability. The method he developed for eliminating cuts from \textbf{LK}-derivations works by inductively reducing the cuts in a given \textbf{LK}-derivation to cuts which either have a reduced {\em formula complexity} and/or reduced {\em  rank}~\cite{prooftheory}. This method of cut elimination is known as {\em reductive cut elimination}. A useful consequence of cut elimination for the \textbf{LK}-calculus is that cut-free \textbf{LK}-derivations have the {\em subformula property},  i.e. every formula occurring in the derivation is a subformula of some formula in the end sequent. This property admits the construction of {\em Herbrand sequents} and other objects which are essential in proof analysis. 

By using the technique of cut-elimination, it is also possible to gain mathematical knowledge concerning the connection between different proofs of the same theorem. For example, Jean-Yves Girard's application of cut elimination to the F\"{u}rstenberg-Weiss' proof of van der Waerden's theorem~\cite{ProocomWaerdens1987} resulted in the {\em analytic} proof of van der Waerden's theorem as found by van der Waerden himself. From the work of Girard, it is apparent that interesting results can be derived from eliminating cuts in ``mathematical'' proofs. 

A more recently developed method of cut elimination, the CERES method~\cite{CERES}, provides the theoretic framework to directly study the cut structure of \textbf{LK}-derivations, and in the process reduces the computational complexity of deriving a cut-free proof. The cut structure is transformed into a clause set allowing for a clausal analysis of the resulting clause form. Methods of reducing clause set complexity, such as {\em subsumption} and {\em tautology elimination} can be applied to the characteristic clause set to increase the efficiency. It was shown by Baaz \& Leitsch in ``Methods of cut Elimination''~\cite{Baaz:2013:MC:2509679} that this method of cut elimination has a {\em non-elementary speed up} over reductive cut elimination.

In the same spirit of Girard's work, the CERES method was applied to a formalization of  F\"{u}rstenberg's proof of the infinitude of primes~\cite{Baaz:2008:CAF:1401273.1401552}. Instead of formalizing the proof as a single proof (in second-order arithmetic) it was represented as a sequence of first-order proofs enumerated by a single numeric parameter indexing the number of primes assumed to exist (leading to a contradiction). The resulting schema of clause sets was refuted for the first few instances by the system CERES. The general refutation schema, resulting in Euclid's construction of primes, was specified on the mathematical meta-level.  At that time no object-level construction of the resolution refutation schema existed. 

A straightforward mathematical formalization of  F\"{u}rstenberg's proof requires induction. In higher-order logic, induction is easily formalized via the comprehension principle. However in first-order, an induction rule or induction axioms have to be added to the \textbf{LK}-calculus. As it was shown in~\cite{CERESS2}, ordinary reductive cut elimination does not work in the presence of an induction rule in the \textbf{LK}-calculus. There are, however, other systems~\cite{Mcdowell97cut-eliminationfor} which provide cut-elimination in the presence of an induction rule; but these systems do not produce proofs with the subformula property, which is necessary for Herbrand system extraction.   

In ``Cut-Elimination and Proof Schemata''~\cite{CERESS2}, a version of the \textbf{LK}-calculus was introduced (\textbf{LKS}-calculus) allowing for the formalization of sequences of proofs as a single object level construction, i.e. {\em proof schema}, as well as a framework for performing a CERES-type cut elimination on proof schemata. Cut elimination performed within the framework of~\cite{CERESS2} results in cut-free proof schemata with the subformula property. 

In previous work, we applied the schematic CERES method of~\cite{CERESS2} to a proof formalized in the \textbf{LKS}-calculus\cite{MeCadePaper2015,MyThesis}. We referred to this formal proof as the {\em Non-injectivity  Assertion} (NiA) schema. A well known variation of the NiA-schema, of which has been heavily studied in literature, is the {\em infinitary Pigeonhole Principle} (PHP). Though a resolution refutation schema was found and mathematically specified~\cite{MeCadePaper2015}, it was not possible to express this refutation schema within the language of~\cite{CERESS2}.  The main problem  was the specification of a unification and refutation schema. This issue points to a fundamental property of CERES-based schematic cut-elimination, namely that the language for specifying the refutation schema is more complex than that specifying the proof schema.

In this work we construct a formal proof for a weaker variant of the NiA-schema which we call the {\em Eventually Constant Assertion schema} (ECA-schema). The ECA is an encapsulation of the infinitary pigeonhole principle where the holes are ordered. For the ECA-schema a specification of the resolution refutation schema within the formalism of~\cite{CERESS2} turned out successful. In particular, we are able to extract a Herbrand system and complete the proof analysis of the ECA-schema.

The paper is structured as follows: In Sec. \ref{sec:SCERES}, we introduce the \textbf{LKS}-calculus and the essential concepts from~\cite{CERESS2} concerning the schematic clause set analysis. In Sec. \ref{sec:MathNiA}, we mathematically prove the  ECA-schema. We leave the formal proof, written in the \textbf{LKS}-calculus, to Appendix  \ref{sec:FormECA}. In Sec. \ref{sec:CCSSE}, we extract the characteristic clause set from the ECA-schema and perform {\em normalization} and tautology elimination. In Sec. \ref{sec:refuteset}, we provide a refutation of the extracted characteristic clause set. In Sec. \ref{sec:Herbrand}, we extract a Herbrand system for the refutation of  Sec. \ref{sec:refuteset}. In Sec. \ref{sec:Conclusion}, we conclude the paper and discuss our conjecture.

\section{The \textbf{LKS}-calculus and Clause set Schema}\label{sec:SCERES}

In this section we introduce the \textbf{LKS}-calculus, which will be used to formalize the ECA-schema, and the schematic CERES method. 

\subsection{Schematic language, proofs, and the \textbf{LKS}-calculus}
The \textbf{LKS}-calculus is based on the \textbf{LK}-calculus constructed by Gentzen~\cite{Gentzen1935}. When one grounds the {\em parameter} indexing an \textbf{LKS}-derivation, the result is an  \textbf{LK}-derivation~\cite{CERESS2}. The term language used is extended to accommodate  the schematic constructs of  \textbf{LKS}-derivations. We work in a two-sorted setting containing a {\em schematic sort} $\omega$ and an {\em individual sort} $\iota$. The schematic sort contains numerals constructed from the constant $0:\omega$, a monadic function $s(\cdot):\omega \rightarrow \omega$ as well as $\omega$-variables $\mathcal{N}_{v}$, of which one variable, the {\em free parameter}, will be used to index \textbf{LKS}-derivations. When it is not clear from context, we will represent numerals as $\overline{m}$. The free parameter will be represented by $n$ unless otherwise noted. 

The individual sort is constructed in a similar fashion to the standard first order language~\cite{prooftheory} with the addition of schematic functions. Thus,  $\iota$ contains countably many constant symbols, countably many {\em constant function symbols}, and  {\em defined function symbols}. The constant function symbols are part of the  standard first order language and the defined function symbols are used for schematic terms. Though, defined function symbols can also unroll to numerals and thus can be of type $\omega^n \to \omega$.  The $\iota$ sort also has {\em free} and {\em bound} variables and an additional concept, {\em extra variables}~\cite{CERESS2}. These are variables introduced during the unrolling of defined function ({\em predicate}) symbols. We do not use extra variables in the formalization of the ECA-schema. Also important are the {\em schematic variable symbols} which are variables of type $\omega \rightarrow \iota$. Essentially second order variables, though, when evaluated with a {\em ground term} from the $\omega$ sort we treat them as first order variables. Our terms are built inductively using constants and variables as a base.  

Formulae are constructed inductively using countably many {\em predicate constants}, logical operators $\vee$,$\wedge$,$\rightarrow$,$\neg$,$\forall$, and $\exists$, as well as  {\em defined predicate symbols} which are used to construct schematic formulae. In this work {\em iterated $\bigvee$} is the only defined predicate symbol used. Its formal specification is:
\begin{equation}
\label{eq:one}
\varepsilon_{\vee}= \bigvee_{i=0}^{s(y)} P(i) \equiv \left\lbrace \begin{array}{c}
{\displaystyle \bigvee_{i=0}^{s(y)} P(i) \Rightarrow \bigvee_{i=0}^{y} P(i) \vee P(s(y)) }\\
{\displaystyle \bigvee_{i=0}^{0} P(i) \Rightarrow P(0)}
\end{array}\right. 
\end{equation}
 From the above described term and formulae language we can provide the inference rules of the \textbf{LKE}-calculus, essentially the \textbf{LK}-calculus~\cite{prooftheory} plus an equational theory $\varepsilon$ (in our case $\varepsilon_{\vee}$ Eq. \ref{eq:one}). This theory, concerning our particular usage, is a primitive recursive term algebra describing the structure of the defined function (predicate) symbols. The \textbf{LKE}-calculus is the base calculus for the \textbf{LKS}-calculus which also includes {\em proof links}.
\begin{definition}[$\varepsilon$-inference rule]

\begin{prooftree}
\AxiomC{$S\left[ t\right] $}
\RightLabel{$(\varepsilon)$}
\UnaryInfC{$S\left[ t'\right] $}
\end{prooftree}
In the $\varepsilon$ inference rule, the term $t$ in the sequent $S$ is replaced by a term $t'$ such that, given the equational theory  $\varepsilon$,  $\varepsilon \models t = t'$.
\end{definition}

To extend the \textbf{LKE}-calculus with  proof links we need a countably infinite set of {\em proof symbols}  denoted by $\varphi, \psi,\varphi_{i}, \psi_{j} \ldots$. Let $S(\bar{x})$ by a  sequent with a vector of schematic variables $\bar{x}$, by  $S(\bar{t})$ we denote the sequent $S(\bar{x})$ where each of the variables in $\bar{x}$ is replaced by the terms in the vector $\bar{t}$ respectively, assuming that they have the appropriate type. Let $\varphi$ be a proof symbol and $S(\bar{x})$ a sequent, then the expression \AxiomC{$(\varphi(\bar{t}))$}
\dashedLine
\UnaryInfC{$S(\bar{t})$}
\DisplayProof
is called a {\em proof link} . For a variable $n:\omega$, proof links
such that the only $\omega$-variable is $n$ are called {\em $n$-proof links} \index{k-proof Link}.

\begin{definition}[\textbf{LKE}-calculus~\cite{CERESS2}]
The sequent calculus $\mathbf{LKS}$
consists of the rules of $\mathbf{LKE}$, where proof links may appear
at the leaves of a proof.
\end{definition}

\begin{definition}[Proof schemata~\cite{CERESS2}]\label{def.proofschema}
\index{Proof Schemata}
  Let $\psi$ be a proof symbol and $S(n,\bar{x})$ be a sequent
  such that $n:\omega$. Then a {\em proof schema pair for $\psi$} is a pair of $\mathbf{LKS}$-proofs $(\pi,\nu(k))$ with end-sequents $S(0,\bar{x})$ and $S(k+1,\bar{x})$ respectively such that $\pi$ may not contain proof links and $\nu(k)$ may
  contain only proof links of the form \AxiomC{$(\psi(k,\bar{a}))$}
  \dashedLine
  \UnaryInfC{$S(k,\bar{a})$}
  \DisplayProof 
and we say that it is a proof link to $\psi$. We call $S(n,\bar{x})$ the end sequent of $\psi$ and assume an identification between the formula occurrences in the end sequents of $\pi$ and $\nu(k)$ so that we can speak of occurrences in the end sequent of $\psi$. Finally a proof schema $\Psi$ is a tuple of proof schema pairs for $\psi_1 , \cdots \psi_\alpha$ written as $\left\langle \psi_1 , \cdots \psi_\alpha \right\rangle$, such that the $\mathbf{LKS}$-proofs for $\psi_{\beta}$ may also contain $n$-proof links to $\psi_{\gamma}$ for $1\leq \beta < \gamma\leq \alpha$. We also say that the end sequent of $\psi_1$ is the end sequent of $\Psi$. 
\end{definition}

We will not delve further into the structure of proof schemata and instead refer the reader to~\cite{CERESS2}. We now introduce the {\em characteristic clause set schema}.

\subsection{Characteristic Clause set Schema}
The construction of the characteristic clause set as described for the CERES method~\cite{CERES} required inductively following the formula occurrences of cut formula ancestors up the proof tree to the leaves. However, in the case of proof schemata, the concept of ancestors and formula occurrence is more complex. A formula occurrence might be an ancestor of a cut formula in one recursive call and in another it might not. Additional machinery is necessary to extract the characteristic clause term from proof schemata. A set $\Omega$ of formula occurrences from the end-sequent of an \textbf{LKS}-proof $\pi$ is called {\em a configuration for $\pi$}. A configuration $\Omega$ for $\pi$ is called relevant w.r.t. a proof schema $\Psi$ if $\pi$ is a proof in $\Psi$ and there is a $\gamma \in \mathbb{N}$ such that $\pi$ induces a subproof $\pi\downarrow \gamma$ of $\Psi \downarrow \gamma$
such that the occurrences in $\Omega$ correspond to cut-ancestors below $\pi\downarrow \gamma$~\cite{thesis2012Tsvetan}. Note that the set of relevant cut-configurations can be computed given a proof schema $\Psi$. To represent a proof symbol $\varphi$ and configuration $\Omega$ pairing in a clause set we assign them a {\em clause set symbol} $cl^{\varphi,\Omega}(a,\bar{x})$, where $a$ is a term of the $\omega$ sort. 

\begin{definition}[Characteristic clause term~\cite{CERESS2}]\label{def:charterm}
\index{Characteristic Term}
Let $\pi$ be an $\mathbf{LKS}$-proof and $\Omega$ a configuration. In the following, by $\Gamma_{\Omega}$ , $\Delta_{\Omega}$ and $\Gamma_{C}$ , $\Delta_{C}$ we will denote multisets of formulas of $\Omega$- and $cut$-ancestors respectively. Let $r$ be an inference in $\pi$. We define the clause-set term $\Theta_r^{\pi,\Omega}$ inductively:
\begin{itemize}
\item if $r$ is an axiom of the form $\Gamma_{\Omega} ,\Gamma_C , \Gamma \vdash \Delta_{\Omega} ,\Delta_C , \Delta$, then \\ $\Theta_{r}^{\pi,\Omega} = \left\lbrace \Gamma_{\Omega} ,\Gamma_C  \vdash \Delta_{\Omega} ,\Delta_C \right\rbrace $
\item if $r$ is a proof link of the form
\AxiomC{$\psi(a,\bar{u})$}
\dashedLine
\UnaryInfC{$\Gamma_{\Omega} ,\Gamma_C , \Gamma \vdash \Delta_{\Omega} ,\Delta_C , \Delta$}
\DisplayProof
then define $\Omega'$ as the set of formula occurrences from $\Gamma_{\Omega} ,\Gamma_C  \vdash \Delta_{\Omega} ,\Delta_C$ and $\Theta_{r}^{\pi,\Omega} = cl^{\psi,\Omega}(a,\bar{u})$
\item if $r$ is a unary rule with immediate predecessor \index{Predecessor} $r'$ , then $\Theta_{r}^{\pi,\Omega} =  \Theta_{r'}^{\pi,\Omega}$

\item if $r$ is a binary rule with immediate predecessors $r_1 $, $r_2 $, then 
\begin{itemize}
\item if the auxiliary formulas of $r$ are $\Omega$- or $cut$-ancestors, then
$\Theta_{r}^{\pi,\Omega} = \Theta_{r_1}^{\pi,\Omega} \oplus \Theta_{r_2}^{\pi,\Omega}$
\item otherwise, $\Theta_{r}^{\pi,\Omega} = \Theta_{r_1}^{\pi,\Omega} \otimes \Theta_{r_2}^{\pi,\Omega}$
\end{itemize}
\end{itemize}
Finally, define $\Theta^{\pi,\Omega} = \Theta_{r_0}^{\pi,\Omega}$ where $r_0$ is the last inference in $\pi$ and $\Theta^{\pi} = \Theta^{\pi,\emptyset}$. We call $\Theta^{\pi}$ the characteristic term of $\pi$. 
\end{definition}

Clause terms evaluate to sets of clauses by $|\Theta| = \Theta$ for clause sets $\Theta$, $|\Theta_1 \oplus \Theta_2| = |\Theta_1| \cup |\Theta_2|$, $|\Theta_1 \otimes \Theta_2| = \{C \circ D \mid C \in |\Theta_1|, D \in |\Theta_2|\}$.

The characteristic clause term is extracted for each proof symbol in a given proof schema $\Psi$, and together they make the {\em characteristic clause set schema} for $\Psi$, $CL(\Psi)$.
\begin{definition}[Characteristic Term Schema\cite{CERESS2}]
Let $\Psi = \left\langle \psi_{1},\cdots, \psi_{\alpha} \right\rangle $ be a proof schema. We define the rewrite rules for clause-set symbols for all proof symbols $\psi_{\beta}$  and configurations $\Omega$ as $cl^{\psi_{\beta},\Omega}(0,\overline{u}) \rightarrow \Theta^{\pi_{\beta},\Omega}$ and $cl^{\psi_{\beta},\Omega}(k+1,\overline{u}) \rightarrow \Theta^{\nu_{\beta},\Omega}$ where $1\leq \beta\leq \alpha$. Next, let $\gamma\in \mathbb{N}$ and $cl^{\psi_{\beta},\Omega}\downarrow_{\gamma}$ be the normal form of $cl^{\psi_{\beta},\Omega}(\gamma,\overline{u})$ under the rewrite system just given extended by rewrite rules for defined function and predicate symbols. Then define $\Theta^{\psi_{\beta},\Omega} = cl^{\psi_{\beta},\Omega}$ and $\Theta^{\Psi,\Omega} = cl^{\psi_{1},\Omega}$ and finally the characteristic term schema $\Theta^{\Psi} =  \Theta^{\Psi,\emptyset}$.
\end{definition}
\subsection{Resolution Proof Schemata} 
From the characteristic clause set we can construct {\em clause schemata} which are an essential part of the definition of {\em resolution terms} and {\em resolution proof schema }\cite{CERESS2}. Clause schemata serve as the base for the resolution terms used to construct a resolution proof schema. One additional notion needed for defining resolution proof schema is that of {\em clause variables}. The idea behind clause variables is that parts of the clauses at the leaves can be passed down a refutation to be used later on. The definition of resolution proof schemata uses clause variables as a way to handle this passage of clauses. Substitutions on clause variables are defined in the usual way. 

\begin{definition}[Clause Schema \cite{CERESS2}]
Let $b$ be an numeric term, $\overline{u}$ a vector
of schematic variables and $\overline{X}$  a vector of clause variables. Then $c(b, \overline{u}, \overline{X})$ is
a clause schema w.r.t. the rewrite system $R$:
\begin{center}
$c(0, \overline{u}, \overline{X}) \rightarrow C \circ X$
and
$c(k + 1, \overline{u}, \overline{X}) \rightarrow c(k, \overline{u}, \overline{X}) \circ D$
\end{center}
where $C$ is a clause with $V(C) \subseteq \left\lbrace \overline{u} \right\rbrace$  and $D$ is a clause with $V(D) \subseteq \left\lbrace k, \overline{u}\right\rbrace $. Clauses and clause variables are clause schemata w.r.t. the empty rewrite system.
\end{definition}
\begin{definition}[Resolution Term \cite{CERESS2}]
Clause schemata are resolution terms; if
$\rho_1$ and $\rho_2$  are resolution terms, then $r(\rho_1 ; \rho_2 ; P)$ is a resolution term, where $P$ is an atom formula schema.
\end{definition}

The idea behind the resolution terms is that in the term  $r(\rho_1 ; \rho_2 ; P)$, $P$ is the resolved atom of the resolvents $\rho_1, \rho_2$. The notion of most general unifier has not yet been introduced being that we introduce the concept as a separate schema from the resolution proof schema. 

\begin{definition}[Resolution Proof Schema \cite{CERESS2}]
A resolution proof schema $\mathcal{R}(n)$ is a structure $( \varrho_1 , \cdots , \varrho_\alpha )$ together with a set of rewrite rules $\mathcal{R} = \mathcal{R}_1 \cup \cdots \cup \mathcal{R}_{\alpha}$ ,
where the $\mathcal{R}_i\ (for \ 1 \leq i \leq \alpha )$ are pairs of rewrite rules 
\begin{center}
$\varrho_i (0, \overline{w},\overline{ u},  \overline{X} ) \rightarrow \eta_i$\\
and\\
$\varrho_i (k+1,\overline{w},\overline{ u},  \overline{X} ) \rightarrow \eta'_i $
\end{center}

where, $\overline{w},\overline{ u},$ and $ \overline{X}$ are vectors of $\omega$, schematic, and clause variables respectively, $\eta_i$ is a resolution term over terms of the form $\varrho_j(a_j , \overline{m},\overline{ t},  \overline{C})$ for $i<j\leq \alpha$, and $\eta'_i$ is a resolution term over terms of the form $\varrho_j(a_j , \overline{m},\overline{ t},  \overline{C})$ and $\varrho_i(k, \overline{m},\overline{ t},  \overline{C})$ for $i < j \leq \alpha$; by $a_j$, we denote a term of the  $\omega$ sort.
\end{definition}

The idea behind the definition of resolution proof schema is that the definition simulates a recursive construction of a resolution derivation tree and can be unfolded into a tree once the free parameter is instantiated. The expected properties of resolution and resolution derivations hold for resolution proof schema, more detail can be found in \cite{CERESS2}. 

\begin{definition}[Substitution Schema \cite{CERESS2}]
Let $u_1 , \cdots , u_{\alpha}$ be schematic variable
symbols of type $\omega \rightarrow \iota$ and $t_1 , \cdots , t_{\alpha}$ be term schemata containing no other $\omega$-variables than $k$. Then a substitution schema is an expression of the form $\left[ u_1 /\lambda k.t_{1} , \cdots , u_{\alpha} /\lambda k.t_{\alpha} \right]$.
\end{definition}

Semantically, the meaning of the substitution schema is for all $\gamma\in \mathbb{N}$ we have a substitution of the form $\left[ u_1(\gamma) /\lambda k.t_{1}\downarrow_{\gamma} , \cdots , u_{\alpha}(\gamma) /\lambda k.t_{\alpha}\downarrow_{\gamma} \right]$. For the resolution proof schema the semantic meaning is as follows, Let $R(n) = ( \varrho_{1}, \cdots , \varrho_{\alpha} )$ be a resolution proof schema, $\theta$ be a clause substitution, $\nu$ an $\omega$-variable substitution, $\vartheta$ be a substitution schema, and $\gamma \in \mathbb{N}$, then  $R(\gamma)\downarrow$ denotes a resolution
term which has a normal form of $\varrho_1 (n,\overline{w}, \overline{u} , \overline{X} )\theta\nu\vartheta[n/\gamma ]$ w.r.t. $R$ extended by rewrite rules for defined function and predicate symbols.

\subsection{Herbrand Systems}

From the resolution proof schema and the substitution schema we can exact a so-called {\em Herbrand system}. The idea is to generalize the mid sequent theorem of Gentzen to proof schemata \cite{Baaz:2013:MC:2509679,prooftheory}. This theorem states that a proof (cut-free or with quantifier-free cuts) of a prenex end-sequent can be transformed in a way that there is a midsequent separating quantifier inferences from propositional ones. The mid-sequent is propositionally valid (w.r.t. the axioms) and contains (in general several) instances of the matrices of the prenex formulas; it is also called a {\em Herbrand sequent}. The aim of this paper is to extract schematic Herbrand sequents from schematic cut-elimination via CERES. We restrict the sequents further to skolemized ones. In the schematization of these sequents we allow only the matrices of the formulas to contain schematic variables (the number of formulas in the sequents and the quantifier prefixes are fixed).  

\begin{definition}[skolemized prenex sequent schema]\label{def:sps-schema}
Let 
$$S(n) = \Delta_n, \varphi_{1}(n), \cdots, \varphi_{k}(n) \vdash \psi_{1}(n), \cdots,  \psi_{l}(n), \Pi_n, \mbox{ for }k,l\in \mathbb{N} \mbox{ where}$$
\begin{tabular}{ll}
$\varphi_{i}(n) = \forall x_{1}^{i}\cdots \forall x_{\alpha_{i}}^{i} F_{i}(n,x_{1}^{i},\cdots, x_{\alpha_{i}}^{i}),$ \ \ &$\psi_{j}(n) = \exists x_{1}^{j}\cdots \exists y_{\beta_{j}}^{j} E_{j}(n,y_{1}^{j},\cdots, y_{\beta_{j}}^{j}),$
\end{tabular}\\

for $\alpha_{i},\beta_{j}\in \mathbb{N}$, $F_{i}$ and $E_{j}$ are quantifier-free schematic formulas and $\Delta_n,\Pi_n$ are multisets of quantifier-free formulas of fixed size; moreover, the only free variable in any of the formulas is $n:\omega$. Then $S(n)$ is called a skolemized prenex sequent schema (sps-schema).
\end{definition}

\begin{definition}[Herbrand System]
\label{def:herbrand}
Let $S(n)$ be a sps-schema as in Definition~\ref{def:sps-schema}. Then a Herbrand system for $S(n)$ is a rewrite system $\Rcal$ (containing the list constructors and unary function symbols $w_{i}^{x}$, for x $\in \left\lbrace \varphi, \psi \right\rbrace$), such that  for each $\gamma \in \mathbb{N}$, the normal form of $w_{i}^{x}(\gamma)$ w.r.t $\Rcal$ is a list of list of terms $t_{i,x,\gamma}$ (of length $m(i,x)$) such that the sequent 
$$\Delta_\gamma,\Phi_1(\gamma),\ldots,\Phi_k(\gamma) \seq \Psi_1(\gamma),\ldots,\Psi_l(\gamma)$$
for 
\begin{eqnarray*}
\Phi_j(\gamma) &=& \bigwedge^{m(j,\varphi)}_{p=1}E_j(\gamma,t_{j,\varphi,\gamma}(p,1),\ldots,t_{j,\varphi,\gamma}(p,\alpha_j))\ (j=1,\ldots,k),\\
\Psi_j(\gamma) &=& \bigvee^{m(j,\psi)}_{p=1}F_j(\gamma,t_{j,\psi,\gamma}(p,1),\ldots,t_{j,\psi,\gamma}(p,\beta_j))\ (j=1,\ldots,l),
\end{eqnarray*}
is \textbf{LKE}-provable.
\end{definition}

Though our definition of a Herbrand system differs from the definition introduced in~\cite{CERESS2} (where only purely existential schemata are treated), it is only a minor syntactic generalization. All results proven in~\cite{CERESS2} carry over to this more general form above.

\section{``Mathematical'' Proof of the ECA Statement and Discussion of Formal Proof}\label{sec:MathNiA}
For lack of space, we will not provide a formal proof of the ECA-schema in the \textbf{LKS}-calculus (see Appendix \ref{sec:FormECA}), but rather a mathematical argument proving the statement, of which closely follows the intended formal proof. The ECA-schema can be stated as follows: 

\begin{theorem}[Eventually Constant Assertion]
Given a total monotonically decreasing function $f:\mathbb{N}\rightarrow \left\lbrace 0,\cdots , n \right\rbrace $, for $n \in \mathbb{N}$, then there exists an $x \in \mathbb{N}$ such that for all $y \in \mathbb{N}$, where $x\leq y$, it is the case that $f(x) = f(y)$.
\end{theorem}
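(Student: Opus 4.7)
My plan is to argue by induction on the schematic parameter $n$ bounding the codomain, exploiting monotonicity to reduce the problem on the range $\{0,\ldots,n+1\}$ to the inductive hypothesis on $\{0,\ldots,n\}$.

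For the base case $n=0$, the function $f:\mathbb{N}\to\{0\}$ is necessarily the constant function $0$, so witnessing $x=0$ suffices: for every $y$ we have $f(0)=0=f(y)$. The propositional content is trivial and in the formalization will correspond to the proof $\pi$ of the base end-sequent, with the disjunction $\bigvee_{i=0}^{0}P(i)$ unfolding, via $\varepsilon_{\vee}$, to the atomic case $P(0)$.

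For the inductive step, suppose the result holds for every monotonically decreasing $f:\mathbb{N}\to\{0,\ldots,n\}$, and let $f:\mathbb{N}\to\{0,\ldots,n+1\}$ be monotonically decreasing. I would split on whether $n+1$ lies in the image of $f$ past some stage. Concretely, either (a) $f(y)=n+1$ for every $y\in\mathbb{N}$, in which case $x=0$ already witnesses eventual constancy, or (b) there is some $y_{0}$ with $f(y_{0})\leq n$. In case (b), monotonicity forces $f(z)\leq n$ for every $z\geq y_{0}$, so the shifted function $g(z):=f(z+y_{0})$ is a monotonically decreasing map $\mathbb{N}\to\{0,\ldots,n\}$ and the inductive hypothesis supplies an $x'$ with $g(x')=g(y')$ for every $y'\geq x'$; then $x:=x'+y_{0}$ works for $f$. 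In the formal proof this case split corresponds to a $\bigvee$-introduction on the schematic disjunction asserting that some value in the range is assumed, i.e. the $\varepsilon_{\vee}$ rewrite peeling off the $P(s(y))$ disjunct, and the recursive call on $\psi(k)$ in the proof schema encodes the reduction to $g$.

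The main obstacle is not the mathematics, which is a standard pigeonhole-by-induction argument, but the formal bookkeeping required to present it as an $\mathbf{LKS}$ proof schema with the recursion structure demanded by Definition~\ref{def.proofschema}: the inductive hypothesis has to be introduced as a $\Pi_{2}$ cut with a carefully chosen eigenvariable witnessing the shift $z\mapsto z+y_{0}$, and the unrolling of $\bigvee_{i=0}^{s(n)}$ must be synchronised with the proof link $\psi(k,\bar{a})$ so that the characteristic clause term extracted in Section~\ref{sec:CCSSE} admits a refutation schema expressible in the restricted formalism of~\cite{CERESS2}. Getting the quantifier prefixes and the monotonicity side-formulas in the cut into the right shape, rather than the argument itself, is what makes the ECA-schema (as opposed to the unrestricted NiA-schema) tractable.
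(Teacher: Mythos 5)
Your argument is correct and is essentially the same proof as the paper's: induction on the size of the codomain, with a case split in the step on whether the maximal value is always attained or drops at some point, using monotonicity to conclude that after the drop the function maps into the smaller codomain so that the induction hypothesis applies. Your version merely makes the witness bookkeeping (the shift by $y_{0}$) explicit, which the paper leaves implicit.
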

\begin{proof}
If the range only contains $0$ then the theorem trivially holds. Let us assume it holds for a codomain with $n$ elements and show it hold for a codomain with $n+1$ elements. If for all positions $x$, $f(x)=n$ then the theorem holds, else if at some $y$, $f(y)\not = n$ then from that point on $f$ cannot map to $n$ because the function is monotonically decreasing, thus,  $f$ will only have $n$ elements in its codomain and the theorem holds in this case by the induction hypothesis.
\end{proof}

The cut consists of the case distinction made in the stepcase. When written in the \textbf{LKS}-calculus, it is as follows: 

 $$\exists x \forall y \left( \left( \left( x\leq y  \right) \rightarrow n+1 = f(y) \right) \vee f(y) < n+1 \right)$$

Notice that if we are to formalize the statement  in the \textbf{LKS}-calculus the consequent has a $\exists \forall$ quantifier prefix: 
$$\begin{array}{c}\forall x ( \bigvee_{i=0}^{n+1} i = f(x)) ,  \forall x \forall y \Big(  x\leq y \rightarrow f(y) \leq f(x) \Big)  \vdash   \exists x \forall y ( x\leq y  \rightarrow f(x) = f(y)  ) \end{array}$$
The CERES method (as well as the schematic CERES method) was designed for proofs without {\em strong quantification} in the end sequent. To get around this problem the proofs have to be {\em skolemized} \cite{Baaz:2383422}.We will not go into details of proof skolemization in this work, but to note, in the formal proof $g(\cdot )$, is the introduced skolem symbol.

\section{Extraction of the Characteristic Term Schema}\label{sec:CCSSE}
Each of the proof schema pairs of the formal proof (see Appendix \ref{sec:FormECA}) have one cut configuration. In the case of $\psi$ it is the empty configuration, and in the case of $\varphi(n)$ it is $$\Omega(n)\equiv \exists x \forall y \left( \left( \left( x\leq y  \right) \rightarrow n+1= f(y) \right)    \vee f(y) < n+1 \right).$$ This holds for the basecases as well as the stepcases. Thus, we have the following clause set terms: 

\begin{subequations}
\label{seq:charclaset}
\begin{equation}
\begin{array}{l} CL_{ECA}(0)\equiv \Theta^{\psi,\emptyset}(0)\equiv   cl^{\varphi,\Omega(0)}(0) \oplus   \left(\left\lbrace\vdash f(\alpha)<0 \right\rbrace  \otimes \left\lbrace \vdash 0=f(\alpha)\right\rbrace  \otimes  \right.  \\ \left. \left\lbrace  0\leq \beta \vdash \right\rbrace \right)  \end{array}
\end{equation}
\begin{equation}
\begin{array}{l} cl^{\varphi,\Omega(0)}(0) \equiv\Theta^{\varphi,\Omega(0)}(0) \equiv  \left\lbrace  f(\alpha)<0\vdash \right\rbrace \oplus\left\lbrace  f(g(\alpha))<0\vdash \right\rbrace  \oplus \left\lbrace  \vdash \alpha\leq\alpha \right\rbrace   \\ \oplus \left\lbrace \vdash \alpha\leq g(\alpha)\right\rbrace  \oplus \left\lbrace  0=f(\alpha), 0=f(g(\alpha)) \vdash \right\rbrace  \end{array}
\end{equation}
\begin{equation}
\begin{array}{l} CL_{ECA}(n+1)\equiv \Theta^{\psi,\emptyset}(n+1)\equiv    cl^{\varphi,\Omega(n+1)}(n+1)\oplus   \left(\left\lbrace \vdash f(\alpha)<n+1 \right\rbrace  \right.   \\  \left.   \otimes \left\lbrace  \vdash n+1=f(\alpha) \right\rbrace \otimes   \left\lbrace 0\leq \beta \vdash \right\rbrace  \right)  \end{array}
\end{equation} 

\begin{equation}
\begin{array}{l}  cl^{\varphi,\Omega(n+1)}(n+1) \equiv \Theta^{\varphi,\Omega(n+1)}(n+1)\equiv  {\scriptstyle     cl^{\varphi,\Omega(n)}(n) \oplus  \left\lbrace n+1=f(\alpha),n+1=f(g(\alpha)) \vdash \right\rbrace  \oplus } \\ {\scriptstyle  \left\lbrace \vdash \alpha\leq\alpha\right\rbrace  \oplus \left\lbrace  \alpha\leq g(\alpha)\right\rbrace  \oplus \left\lbrace  n+1=f(\beta)\vdash n+1=f(\beta)\right\rbrace  \oplus \left\lbrace \alpha\leq\beta \vdash \alpha\leq\beta \right\rbrace \oplus  \left\lbrace f(\beta)<n+1 \vdash f(\beta)<n+1\right\rbrace  \oplus  }  \\ {\scriptstyle  \left\lbrace f(\alpha)<n+1,\alpha\leq\beta \vdash n=f(\beta),f(\beta)<n  \right\rbrace   } \end{array}
\end{equation}
\end{subequations}
In the characteristic clause set schema $CL_{ECA}(n+1)$ presented in Eq.\ref{seq:charclaset}  tautology and subsumption elimination have not been applied. Applying both types of elimination to $CL_{ECA}(n)$ and normalizing the clause set yields the following clause set $C(n)$:

$$\begin{array}{ccc}
C1(x,k) & \equiv &\vdash x(k)  \leq  x(k) \\ 
C2(x,k) &\equiv &\vdash x(k) \leq g(x(k)) \\
C3(x,i,k) &\equiv & i = f(x(k)) , i = f(g(x(k))) \vdash \\ 
C4(x,y,i,k) & \equiv & y(k) \leq x(k) , f(y(k))< i+1 \vdash\\ && f(x(k))< i , i = f(x(k))\\
C4'(x,y,i,k) & \equiv & y(k) \leq x(k+1) , f(y(k))< i+1 \vdash\\ && f(x(k+1))< i , i = f(x(k+1))\\
C5(x,k) & \equiv & f(x(k))< 0\vdash  \\
C6(x,k) &\equiv & f(g(x(k)))< 0\vdash  \\
C7(x,k) & \equiv & 0\leq x(k) \vdash f(x(k))< n , f(x(k)) = n
\end{array}$$
We have introduced clause names, schematic variables, and an additional  $\omega$-variable which will be used in the refutation of Sec.~\ref{sec:refuteset}.

\section{Refutation of the Characteristic Clause set of the ECA-schema} \label{sec:refuteset}
We discovered the resolution refutation schema which we present here with the help of the SPASS theorem prover \cite{SpassProver} in default mode, and with the flags for standard resolution and ordered resolution set. Various other modes of the theorem prover were tested, however, given that we needed to translate the resulting proof into the simple resolution language of \cite{CERESS2}, the chosen modes provided the easiest proofs for translation. After running the theorem prover on five instances of the clause set, we where able to extract an invariant for the  resolution refutation schema. Essentially, the refutation differentiates between the symbols occurring in the codomain of $f$ and not occurring. This is denoted using the function $g$. The excerpt from the SPASS output in Table \ref{tabone} indicates the invariant. However, even though SPASS was able to provide a refutation for each instance, we could not use these refutations directly in the resolution refutation schema being that the SPASS output ignores the structural importance of the $\omega$ sort. Unlike the ordering problem of the NiA-schema \cite{MeCadePaper2015,MyThesis}, this choice made by SPASS was not necessary to the refutation of the ECA-schema and we were able find a suitable refutation. 
\begin{table}
\begin{tabular}{l|l}
310[0:MRR:309.0,306.1] &  $\vdash$ \ $f(\alpha) < 3$\ \\ 
311[0:MRR:10.1,310.0] & $\alpha \leq \beta$ $\vdash$ \ $2 = f(\beta)$ \ \ $f(\beta) < 2$\ \\
312[0:Res:2.0,311.0] &  $\vdash$ \ $2 = f(\alpha)$ \ \ $f(\alpha) < 2$\\
314[0:Res:312.0,6.1] & \ $2 = f(\alpha)$ \ $\vdash$ \ $f(g(\beta)) < 2$\ \\
315[0:Res:314.1,11.1] & \ $2 = f(\alpha)$ \ \ $g(\alpha) \leq \beta$ \ $\vdash$ \ $1 = f(\beta)$ \ \ $f(\beta) < 1$\ \\
316[0:Res:312.0,315.0] & \ $g(\alpha) \leq \beta$ \ $\vdash$ \ $f(\alpha) < 2$\ \ $1 = f(\beta)$ \ \ $f(\beta) < 1$\ \\
317[0:Res:2.0,316.0] &  $\vdash$ \ $f(\alpha) < 2$\ \ $1 = f(g(\alpha))$ \ \ $f(g(\alpha)) < 1$\ \\
318[0:Res:3.0,316.0] &  $\vdash$ \ $f(\alpha) < 2$\ \ $1 = f(g(g(\alpha)))$ \ \ $f(g(g(\alpha))) < 1$\ \\
321[0:Res:318.1,7.1] & \ $1 = f(g(\alpha))$ \ $\vdash$ \ $f(\alpha) < 2$\ \ $f(g(g(\alpha))) < 1$\ \\
322[0:Res:321.2,14.1] & \ $1 = f(g(\alpha))$ \ \ $g(g(\alpha)) \leq \beta$ \ $\vdash$ \ $f(\alpha) < 2$\ \ $0 = f(\beta)$ \\
325[0:Res:317.1,322.0] & \ $g(g(\alpha)) \leq \beta$ \ $\vdash$ \ $f(\alpha) < 2$\ \ $f(g(\alpha)) < 1$\  \ $f(\alpha) < 2$\ \ $0 = f(\beta)$ \\
327[0:Obv:325.1] & \ $g(g(\alpha)) \leq \beta$ \ $\vdash$ \ $f(g(\alpha)) < 1$\  \ $f(\alpha) < 2$\ \ $0 = f(\beta)$ \\
328[0:Res:2.0,327.0] &  $\vdash$ \ $f(g(\alpha)) < 1$\  \ $f(\alpha) < 2$\ \ $0 = f(g(g(\alpha)))$ \\
329[0:Res:3.0,327.0] &  $\vdash$ \ $f(g(\alpha)) < 1$\  \ $f(\alpha) < 2$\ \ $0 = f(g(g(g(\alpha))))$ \\
335[0:Res:329.2,8.1] & \ $0 = f(g(g(\alpha)))$ \ $\vdash$ \ $f(g(\alpha)) < 1$\  \ $f(\alpha) < 2$\\
336[0:MRR:335.0,328.2] &  $\vdash$ \ $f(g(\alpha)) < 1$\  \ $f(\alpha) < 2$\\
337[0:Res:336.0,14.1] & \ $g(\alpha) \leq \beta$ \ $\vdash$ \ $f(\alpha) < 2$\ \ $0 = f(\beta)$ \\
338[0:Res:2.0,337.0] &  $\vdash$ \ $f(\alpha) < 2$\ \ $0 = f(g(\alpha))$ \\
339[0:Res:3.0,337.0] &  $\vdash$ \ $f(\alpha) < 2$\ \ $0 = f(g(g(\alpha)))$ \\
344[0:Res:339.1,8.1] & \ $0 = f(g(\alpha))$ \ $\vdash$ \ $f(\alpha) < 2$\\
345[0:MRR:344.0,338.1] &  $\vdash$ \ $f(\alpha) < 2$\\
\end{tabular}
\caption{Excerpt from SPASS output for the clause set instance $C(5)$ indicating the invariant.}\label{tabone}
\end{table}

Our resolution refutation schema of the ECA-schema is $\mathcal{R} = \left(\varrho_{1},\cdots ,\varrho_{10} \right)$, where we use one clause variable $Y$, two schematic variables, and one $\omega$-variable. Our substitution schema is as follows: 
$$\vartheta =\left\lbrace x(k)\leftarrow \lambda k.(h(k)) ,  y(k)\leftarrow \lambda k.(h(k)) \right\rbrace  $$  
where $h(\cdot )$ is defined as  $h(0) \to 0,\ h(s(k)) \to g(h(k))$. The components are as follows:

\begin{equation*}
\begin{array}{ll} \varrho_{1}(n+1,k,x,y,Y) \Rightarrow & r(\varrho_{2}(n+1,k,x,y,Y); \varrho_{5}(n,k,x,y,Y\circ \\ & ( f(x(k)) < n + 1\vdash) );f(x(k)) < n + 1)\\\\

\varrho_{1}(0,k,x,y,Y) \Rightarrow & r(\varrho_{2}(0,k,x,y,Y);C5(x,k) ;f(x(k)) < 0)\\\\

\varrho_{2}(n+1,k,x,y,Y) \Rightarrow  & r(\varrho_{3}(n+1,k,x,y,Y);r(C1(x,k);\\ 
& C7(x,k);x(k)\leq x(k));n+1 = f(x(k)))\\\\

\varrho_{2}(0,k,x,y,Y) \Rightarrow &  r(\varrho_{3}(0,k,x,y,Y);r(C1(x,k);C7(x,k);x(k)\leq x(k));\\ & n+1 = f(x(k)))\\\\
\end{array}
\end{equation*}
\begin{equation*}
\begin{array}{ll}
\varrho_{3}(n+1,k,x,y,Y) \Rightarrow & r(\varrho_{4}(n+1,k,x,y,Y);C3(x,n+1,k); \\ & n+1=f(g(x(k))))\\\\

\varrho_{3}(0,k,x,y,Y) \Rightarrow & r(\varrho_{4}(0,k,x,y,Y);C3(x,0,k);0=f(g(x(k))))\\\\

\varrho_{4}(n+1,k,x,y,Y) \Rightarrow & r(\varrho_{5}(n,k+1,x,y,Y\circ f(x(k+1))<n+1\vdash );\\ & r(C2(x,k);C7(x,k+1);f(x(k+1))<n+1) \\\\

\varrho_{4}(0,k,x,y,Y) \Rightarrow &  r(C6(x,k);r(C2(x,k);C7(x,k+1);f(g(x(k)))<0)
\end{array}
\end{equation*}
\begin{equation*}
\begin{array}{ll}
\varrho_{5}(n+1,k,x,y,Y) \Rightarrow &  r(\varrho_{6}(n+1,k,x,y,Y);\varrho_{5}(n,k,x,y,Y\circ \\ &  ( f(x(k)) < n+1 \vdash)) ;f(x(k)) < n + 1)\\\\

\varrho_{5}(0,k,x,y,Y) \Rightarrow & r(\varrho_{6}(0,k,x,y,Y);C5(x,k) ;f(x(k)) < 0)\\\\

\varrho_{6}(n+1,k,x,y,Y) \Rightarrow &  r(\varrho_{7}(n+1,k,x,y,Y);\varrho_{8}(n+1,k,x,y,Y)\\ & ;n+1 = f(x(k)))\\\\
\varrho_{6}(0,k,x,y,Y) \Rightarrow & r(\varrho_{7}(0,k,x,y,Y);\varrho_{8}(0,k,x,y,Y); 0 = f(x(k))\\\\
\end{array}
\end{equation*}
\begin{equation*}
\begin{array}{ll}
\varrho_{7}(n+1,k,x,y,Y) \Rightarrow & r(\varrho_{9}(n+1,k,x,y,Y);C3(x,n+1,k);\\ &n+1=f(g(x(k))))\\\\

\varrho_{7}(0,k,x,y,Y) \Rightarrow & r(\varrho_{9}(0,k,x,y,Y);C3(x,0,k);0=f(g(x(k))))\\\\

\varrho_{8}(n+1,k,x,Y) \Rightarrow & r(C1(x,k);Y\circ C4(x,y,n,k);x(k)\leq x(k))\\\\

\varrho_{8}(0,k,x,y,Y)\Rightarrow & r(C1(x,k);Y\circ C4(x,y,0,k);x(k)\leq x(k))\\\\
\end{array}
\end{equation*}
\begin{equation*}
\begin{array}{ll}
\varrho_{9}(n+1,k,x,y,Y) \Rightarrow & r(\varrho_{5}(n,k+1,x,y,Y'\circ f(g(x(k)))<n+1\vdash );\\ & \varrho_{10}(n+1,x,y,Y);f(g(x(k)))<n+1)\\\\

\varrho_{9}(0,k,x,y,Y) \Rightarrow &  r( C6(x,k) ;\varrho_{10}(0,k,x,y,Y) ;f(g(x(k)))<0)\\\\

\varrho_{10}(n+1,k,x,y,Y) \Rightarrow & r(C2(x,k);Y\circ C4'(x,y,n,k);x(k)\leq g(x(k)))\\\\

\varrho_{10}(0,k,x,y,Y) \Rightarrow & r(C2(x,k);Y\circ C4'(x,y,0,k);x(k)\leq g(x(k)))\end{array}
\end{equation*}

\begin{figure}
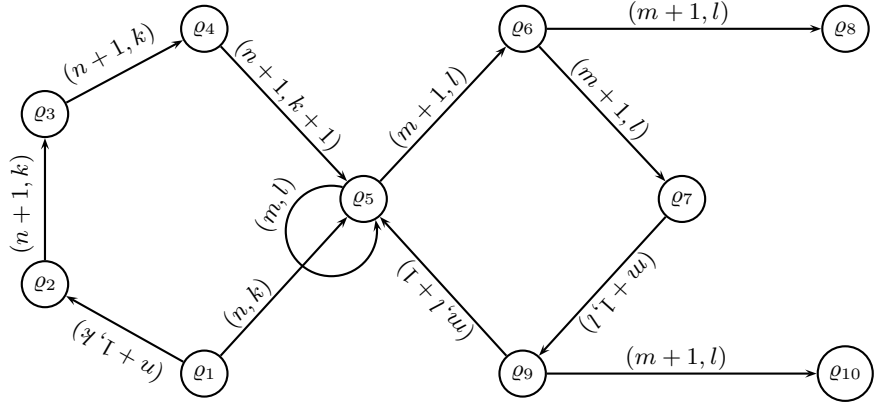

\begin{center}
$
\psmatrix[colsep=1.45cm,rowsep=.45cm,mnode=circle]
 &\varrho_{4}&&\varrho_{6}&&\varrho_{8}\\
\varrho_{3}\\
 &&\varrho_{5}&&\varrho_{7}\\
\varrho_{2}\\
 &\varrho_{1}&&\varrho_{9}&&\varrho_{10}
\ncline{->}{1,2}{3,3}
\lput{:U}{\rput[r]{0}(.8 ,.2){(n+1,k+1)}}
\ncline{->}{5,2}{3,3}
\lput{:U}{\rput[r]{0}(-.1,.2){(n,k)}}
\ncline{->}{5,2}{4,1}
\lput{:U}{\rput[r]{0}(.6,.3){(n+1,k)}}
\ncline{->}{4,1}{2,1}
\lput{:U}{\rput[r]{0}(.6,.3){(n+1,k)}}
\ncline{->}{2,1}{1,2}
\lput{:U}{\rput[r]{0}(.6,.3){(n+1,k)}}
\ncline{->}{3,3}{1,4}
\lput{:U}{\rput[r]{0}(.6,.2){(m+1,l)}}
\ncline{->}{1,4}{3,5}
\lput{:U}{\rput[r]{0}(.6,.2){(m+1,l)}}
\ncline{->}{3,5}{5,4}
\lput{:U}{\rput[r]{0}(.6,.2){(m+1,l)}}
\ncline{->}{5,4}{3,3}
\lput{:U}{\rput[r]{0}(.6,.2){(m,l+1)}}
\ncline{->}{5,4}{5,6}
\lput{:U}{\rput[r]{0}(.6,.2){(m+1,l)}}
\ncline{->}{1,4}{1,6}
\lput{:U}{\rput[r]{0}(.6,.2){(m+1,l)}}
\nccircle[nodesep=0pt,angleA=135]{->}{3,3}{.6cm}
\lput{:U}{\rput[r]{110}(-.9,.7){(m,l)}}
\endpsmatrix
$
\end{center}
\caption{A graph representation of the resolution refutation. The variable $n$ is the free parameter, $k$ is the $\omega$-variable used in the refutation and the variables $m$ and $l$ are dependent on the position in the computation.}
\label{fig:spaceship}
\end{figure}

One can find a graphical representation of the refutation in Fig.~\ref{fig:spaceship}. The clause substitution is $\theta = \lbrace Y\leftarrow\ \vdash \rbrace$, the $\omega$-variable substitution is $\nu = \lbrace k\leftarrow \overline{\mu} \rbrace$ for any $\overline{\mu}\in \mathbb{N}$.
The normal form of the refutation for $\gamma\in \mathbb{N}$ is  
$$\varrho_1 (n,k,x,y, Y)\theta\nu\vartheta\left[n\leftarrow \gamma \right] = \varrho_1 (\gamma,\overline{\mu},\lambda_k.(i_s(k)),\lambda_k.(h(k)), \vdash ),$$ 
where $i_s(0) =0,\ i_s(s(k)) = s(i_s(k))$. Substitution of the empty clause into $Y$ suffices for every instance, i.e.  $\lbrace Y\leftarrow\ \vdash \rbrace$. This property makes extraction of the Herbrand system much easier.

\section{The Herbrand System for the ECA-schema} \label{sec:Herbrand}

Now we move on to the construction of a  Herbrand system for the sequent

$$\begin{array}{l}S(n) \equiv \left( \forall x \bigvee_{i=0}^{n} i = f(x),   \forall y \left( 0\leq y \rightarrow f(y) \leq f(0)\right)  \right)  \vdash \\  \exists x \left( x \leq g(x)  \rightarrow f(x) = f(g(x)) \right) \end{array}$$

based on our proof analysis. The sequent $S(n)$ is an sps-schema of the form $\varphi_1(n),\varphi_2(n)\vdash \psi_1(n)$. Note that we dropped one of the quantifiers from the antecedent being that it is obvious from the proof itself what the substitution would be, see Appendix ~\ref{sec:FormECA}. Each formula in $S(n)$ is derived along with a set of clauses in the proof schemata $\Psi =\left\langle (\psi(n+1),\psi(0)), (\varphi(n+1),\varphi(0)) \right\rangle$. By observing the construction of the 
formulae in Appendix ~\ref{sec:FormECA}.\ref{Fig:proofPSIone} and  Appendix ~\ref{sec:FormECA}.\ref{Fig:proofPHIone},  one can see that $\varphi_1(n)$, $\varphi_2(n)$, and $C7(x,k)$ as constructed together, while $\psi_1(n)$, $C2(x,k)$, and $C3(x,i,k)$ are constructed together. We will only consider the case when  the $\omega$-variable substitution is $\nu = \lbrace k \rightarrow 0\rbrace$ to simplify the derivation. 

Notice that $C7(x,k)$ is used at the top of the refutation and only twice. Once as $C7(x,0)$ and once as $C7(x,1)$. On the other hand, $C2(x,k)$ is used in $\varrho_{10}$ and $C3(x,i,k)$ is use in $\varrho_{7}$. For every pair $(i,l)$ in the ranges $0\leq i\leq n+1$,$0\leq l< n+1$, the clauses $C2(x,l)$ and  $C3(x,i,l)$, and $C2(x,l+1)$ and  $C3(x,i,l+1)$ are used in the refutation. This implies, by the substitution schema that $\psi_1(n)$ will have its quantifier replaced by the term derived from $h(i)$, for all  $i\in \left[ 0,n\right] $, in the Herbrand system.  This information can be used to construct the required rewrite system: 

$$ \mathcal{R} = \left\lbrace  \begin{array}{l} w_{1}^{\varphi}(k+1) \Rightarrow [[0];[g(0)]] \\ w_{1}^{\varphi}(0) \Rightarrow [[0];[g(0)]]\\\\
 w_{2}^{\varphi}(k+1) \Rightarrow [[0];[g(0)]] \\ w_{2}^{\varphi}(0) \Rightarrow [[0];[g(0)]]  \\\\
 w_{1}^{\psi}(k+1) \Rightarrow [[h(k+1)];w_{1}^{\psi}(k)] \\ w_{1}^{\psi}(0) \Rightarrow [0]\end{array}\right. $$
To finish our construction of the Herbrand system using Def. \ref{def:herbrand} We need to put all of the parts together as a single sequent as follows

$$ \bigvee_{i=0}^{n} i = f(0),\bigvee_{i=0}^{n} i = f(g(0)), \left( 0\leq 0 \rightarrow f(0) \leq f(0)\right), $$
$$  \left( 0\leq g(0) \rightarrow f(g(0)) \leq f(0)\right) \vdash  \bigvee_{i=0}^{n} \left( h(i) \leq g(h(i))  \rightarrow f(h(i)) = f(g(h(i))) \right) . $$

At first this does not seem to be \textbf{LKE} provable, However, one has to remember that for the construction of our cut formula we made an assumption that $f$ is monotonically decreasing and has a codomain consisting of elements in the interval $\left[0,n \right]$. These assumptions are represented by the following axiom found in  Appendix ~\ref{sec:FormECA}:
$$AX\equiv  f(\alpha) < n+1 ,  \alpha\leq \beta  \vdash    n= f(\beta)  , f(\beta) < n $$

It is not used in the construction of the end sequent but is used for the construction of the cut formulae. We just need to find a set of axioms which correspond to these semantic assumptions, the following set suffices:

\begin{center}
\begin{tabular}{ll}
\multicolumn{2}{l}{$\begin{array}{c}A1(i):\  \bigvee_{i=0}^{j-1} i= f(\alpha), j =f(g(\alpha)), f(g(\alpha)) < f(\alpha)  \vdash \end{array}$}\\
\multicolumn{2}{l}{$\begin{array}{cc}A2(i):\ i= f(\alpha), \bigvee_{j=0}^{i-1}  j=f(g(\alpha)),   \alpha\leq g(\alpha)  \vdash  \end{array}$}\\
\multicolumn{2}{l}{$\begin{array}{c}A3(i):\ i= f(\alpha), i =f(g(\alpha)) \vdash f(\alpha) = f(g(\alpha))  \end{array}$} \\ 
\multicolumn{2}{l}{$\begin{array}{c}A4(i):f(g(\alpha)) = f(\alpha)  \vdash f(\alpha) = f(g(\alpha))  \end{array}$}\\
$\begin{array}{c}A5(i):\  \vdash \alpha \leq \alpha  \end{array}$ & \\ $\begin{array}{c}A6(i): f(\alpha) < f(\alpha)   \vdash \end{array}$
\end{tabular}
\end{center}

The first pair of axioms enforce the required properties of $f$ and $g$, the next pair provide the needed properties of equality, and the last pair provide the needed properties of linear orderings. Interesting enough, using these axioms, we are able to prove the derived Herbrand sequent using only a single nesting of $g$, thus making the majority of the consequent redundant.  This is a result of our usage of the clause $C7(x,k)$. Thus, it turns out that a minimal Herbrand sequent is the following: 
$$ \bigvee_{i=0}^{n} i = f(0),\bigvee_{i=0}^{n} i = f(g(0)),  \left( 0\leq g(0) \rightarrow f(g(0)) \leq f(0)\right),  $$ $$ \left( 0\leq 0 \rightarrow f(0) \leq f(0)\right) \vdash  0 \leq g^{1}(0)  \rightarrow f(0) = f(g^{1}(0))   . $$
The Herbrand sequent can be derived for deeper nestings of $g$ by changing the $\omega$-variable substitution used.

\section{Conclusion} \label{sec:Conclusion}

Weakening the NiA-schema of \cite{MeCadePaper2015} by reducing the complexity of the cuts  allowed for extraction of the Herbrand system using the concepts of \cite{CERESS2}. As a case study of the schematic CERES method, to the best of our knowledge this is the first one. From the analysis of the ECA-schema there are two issues which seem to influence the applicability  of the  schematic CERES method. The first issue, as we pointed out earlier, is the ordering of the terms in the $\omega$ sort. However, a second issue arising in this work is the complexity of the terms,  specifically what is the highest arity function symbol allowed. In the case of the NiA-schema, terms were constructed from both an arity two  and an arity one function symbol, but in the case of the ECA-schema only  arity one function symbols where used. When only arity one function symbols are used nesting of the function symbols does not require the addition of {\em extra variables} in a given term, of which were used in the NiA-schema \cite{MeCadePaper2015,MyThesis}. This seems to allow for the creation of more complex orderings of the $\omega$ sort. We conjecture a sufficient condition that proof schema containing only arity one function symbols can be analysed using the schematic CERES method. Also, an open problem we plan to address in future work is a generalization of the resolution refutation calculus of \cite{CERESS2} which can handle more complex ordering structures\cite{MyThesis}. It seems necessary to handle more complex ordering structure if one wants to formalize and analyse more complex mathematical arguments such as F\"{u}rstenberg's proof of the infinitude of primes.

\bibliography{references}

\newpage
\appendix

\section{ECA Formalized in the \textbf{LKS}-calculus}\label{sec:FormECA}

In our \textbf{LKS}-calculus, cut ancestors have a $^{*}$ and cut-configuration ancestors have a $^{**}$. The proof has already been skolemized. We will make the following abbreviations to simplify the formal proof:\\ $ESC \equiv \exists x ( x\leq g(x)  \rightarrow f(x) = f(g(x))  ) $,\\ $MD \equiv \forall x \forall y \Big(  x\leq y \rightarrow f(y) \leq f(x) \Big)$, \\ $FD(n)\equiv \forall x ( \bigvee_{i=0}^{n} i = f(x))$ ,\\ $CUT \equiv \exists x \forall y \left(  \left( x\leq y  \right) \rightarrow n+1= f(y) \vee f(y) < n+1 \right)$,\\ $ AX\equiv  f(\alpha) < n+1^{*} ,  \alpha\leq \beta^{*}  \vdash    n= f(\beta)^{*}  , f(\beta) < n ^{*}$.\\ Also, We will remove every inference rule which does not influence the characteristic clause set of the ECA-schema.

\begin{figure}[H]
\begin{tiny}
\begin{prooftree}
\AxiomC{$\begin{array}{c} \vdots\\
\bigvee_{i=0}^{n} i = f(\beta)    \vdash \\    f(\beta) < n+1^{*}  \end{array}$}
 \UnaryInfC{$\begin{array}{c} 
\vdots \end{array}$}
 \AxiomC{$\begin{array}{c} 
 n+1= f(\beta)  \vdash \\  n+1= f(\beta)^{*} \end{array}$}
 \RightLabel{$\vee:l$}
 \BinaryInfC{$\begin{array}{c} 
\vdots \end{array}$}
 \AxiomC{$\begin{array}{c} 
 0 \leq \beta^*  \vdash \\  0 \leq \beta \end{array}$}
 \RightLabel{$\rightarrow:l$}
 \BinaryInfC{$\vdots$}
  \UnaryInfC{$\begin{array}{c} 
FD(n+1),MD  \vdash  CUT(n+1)^{*} \end{array}$}

 \AxiomC{$\begin{array}{c}  \varphi(n+1) \end{array}$}
\dottedLine
 \UnaryInfC{$\begin{array}{c}  CUT(n+1)^{*} \vdash ESC \end{array}$}
 \RightLabel{$cut$}
\BinaryInfC{$\begin{array}{c}FD(n+1),MD\vdash ESC \end{array}$}
\end{prooftree}
\end{tiny}
\caption{Proof symbol $\psi(n+1)$}
\label{Fig:proofPSIone}
\end{figure}

\begin{figure}[H]
\begin{tiny}
\begin{prooftree} 
 \AxiomC{$\begin{array}{c} 
 0= f(\alpha)  \vdash   0= f(\alpha)^{*} \end{array}$}
 \UnaryInfC{$\begin{array}{c}  \vdots \end{array}$}
 \AxiomC{$\begin{array}{c} 
 0 \leq \alpha^* \vdash  0 \leq \alpha \end{array}$}
 \RightLabel{$\rightarrow:l$}
 \BinaryInfC{$\begin{array}{c} \vdots\end{array}$}
 \UnaryInfC{$\begin{array}{c} 
FD(0) , MD \vdash CUT(0)^{*} \end{array}$}
\AxiomC{$\begin{array}{c}  \varphi(0) \end{array}$}
\dottedLine
 \UnaryInfC{$\begin{array}{c}  CUT(0)^{*} \vdash ESC \end{array}$}
 \RightLabel{$cut$}
\BinaryInfC{$\begin{array}{c}FD(0) , MD  \vdash ESC \end{array}$}
\end{prooftree}
\end{tiny}
\caption{Proof symbol $\psi(0)$}
\end{figure}

\begin{figure}[H]
\begin{tiny}
\begin{prooftree}
\AxiomC{$\begin{array}{c}   n+1= f(\beta)^{**} \vdash \\      n+1 = f(\beta) ^{*}\end{array}$}
\AxiomC{$\begin{array}{c} \alpha\leq \beta^{*} \vdash \\      \alpha\leq \beta^{**}\end{array}$}
  \RightLabel{$\rightarrow:l$}

\BinaryInfC{$\begin{array}{c}  \vdots \end{array}$}
 \AxiomC{$\begin{array}{c}   f(\beta) < n+1 ^{**} \vdash \\ f(\beta) < n+1^{*} \end{array}$}
  \RightLabel{$\vee:l$}
\BinaryInfC{$\begin{array}{c} \vdots \end{array}$}

\AxiomC{$\begin{array}{c}AX\end{array}$}
 \RightLabel{$\rightarrow:r$}
 \RightLabel{$cut$}
 \BinaryInfC{$\begin{array}{c} (1)\end{array}$}
 \end{prooftree}
\end{tiny}

\begin{tiny}
\begin{prooftree}
\AxiomC{$\begin{array}{c}  (1) \end{array}$}
\AxiomC{$\begin{array}{c}   n+1 = f(\alpha) ^{*} ,\\  n+1 = f(g(\alpha)) ^{*} \vdash \\     f(\alpha) = f(g(\alpha))  \end{array}$}
\AxiomC{$\begin{array}{c}   \vdash \alpha \leq \alpha^{*}\end{array}$}

 \RightLabel{$\rightarrow:l$}
\BinaryInfC{$\begin{array}{c} \vdots  \end{array}$}
\AxiomC{$\begin{array}{c}   \alpha \leq g(\alpha)\vdash \\ \alpha \leq g(\alpha)^{*}\end{array}$}
 \RightLabel{$\rightarrow:l$}
\BinaryInfC{$\begin{array}{c} \vdots \end{array}$}
 \RightLabel{$cut$}
 \BinaryInfC{$\begin{array}{c}  CUT(n+1)^{**} \vdash  CUT(n)^{*}, ESC  \end{array}$}

\AxiomC{$\begin{array}{c}  \varphi(n) \end{array}$}
\dottedLine
 \UnaryInfC{$\begin{array}{c} CUT(n)^{*} \vdash ESC\end{array}$}
 \RightLabel{$cut$}
 \BinaryInfC{$\begin{array}{c} CUT(n+1)^{**} \vdash ESC,ESC \end{array}$}
  \RightLabel{$c:l$}
 \UnaryInfC{$\begin{array}{c}   CUT(n+1)^{**} \vdash ESC \end{array}$}
 \end{prooftree}
\end{tiny}
\caption{Proof symbol $\varphi(n+1)$}
\label{Fig:proofPHIone}
\end{figure}

\begin{figure}[H]
\begin{tiny}
\begin{prooftree}

\AxiomC{$\begin{array}{c}   0= f(\alpha)^{**}, \\ 0= f(g(\alpha)) ^{**}  \vdash \\    f(\alpha) = f(g(\alpha))  \end{array}$}
\AxiomC{$\begin{array}{c}  \alpha \leq g(\alpha)\vdash  \\ \alpha\leq g(\alpha) ^{**}  \end{array}$}
\RightLabel{$\rightarrow:l$}
\BinaryInfC{$\begin{array}{c}   \vdots \end{array}$}
\AxiomC{$\begin{array}{c}  \vdash  \alpha\leq \alpha  ^{**}  \end{array}$}
\RightLabel{$\rightarrow:l$}
\BinaryInfC{$\begin{array}{c} \vdots   \end{array}$}

\AxiomC{$\begin{array}{c}    f(g(\alpha)) < 0 ^{**} \vdash  \end{array}$}
\RightLabel{$\vee:l$}
\BinaryInfC{$\begin{array}{c}  \vdots \end{array}$}
\end{prooftree}
\end{tiny}
\begin{tiny}
\begin{prooftree}
\AxiomC{$\begin{array}{c}  \vdots \end{array}$}
\AxiomC{$\begin{array}{c}    f(\alpha) < 0 ^{**} \vdash  \end{array}$}
 \RightLabel{$\vee:l$}
\BinaryInfC{$\begin{array}{c}  \vdots \end{array}$}
 \UnaryInfC{$\begin{array}{c} CUT(0)^{**} \vdash  ESC \end{array}$}
 \end{prooftree}
\end{tiny}
\caption{Proof symbol $\varphi(0)$}
\end{figure}

\section{Example Resolution Refutation}

\begin{figure}
\begin{tiny}
\begin{prooftree}
\AxiomC{$ C6(x,0,2)$} 
\AxiomC{$C2(x,0,2)$} 
\AxiomC{$C4'(x,y,0,2)$} 
\BinaryInfC{$f(g(g(0))) < 1 \vdash 0= f(g(g(g(0)))),  f(g(g(g(0))))<0$} 
\BinaryInfC{$f(g(g(0))) < 1 \vdash 0= f(g(g(g(0))))$} 
\AxiomC{$C3(x,0,2)$} 
\BinaryInfC{$f(g(g(0))) < 1,0=f(g(g(0)))\vdash(6)$} 
\end{prooftree}
\end{tiny}
\begin{tiny}
\begin{prooftree}
\AxiomC{$\begin{array}{c} f(g(g(0))) < 1,\\ 0=f(g(g(0)))\vdash (6)\end{array}$} 
\AxiomC{$C1(x,0,2)$} 
\AxiomC{$C4(x,0,2)$} 
\BinaryInfC{$\begin{array}{c} f(g(g(0))) < 1\vdash\\ 0= f(g(g(0))),  f(g(g(0)))<0 \end{array}$} 
\BinaryInfC{$f(g(g(0))) < 1\vdash  f(g(g(0)))<0$} 
\AxiomC{$ C5(x,0,2)  $} 
\BinaryInfC{$f(g(g(0))) < 1\vdash (D)$}
\end{prooftree}
\end{tiny}
\caption{Resolution refutation for instance $n=2$ and $k=0$ (Part B).}
\end{figure}
\begin{figure}
\begin{tiny}
\begin{prooftree}
\AxiomC{$ C6(x,0,1)$} 
\AxiomC{$C2(x,0,1)$} 
\AxiomC{$C4'(x,y,0,1)$} 
\BinaryInfC{$f(g(0)) < 1 \vdash 0= f(g(g(0))),  f(g(g(0)))<0$} 
\BinaryInfC{$f(g(0)) < 1 \vdash 0= f(g(g(0)))$} 
\AxiomC{$C3(x,0,1)$} 
\BinaryInfC{$f(g(0)) < 1,0=f(g(0))\vdash(5)$} 
\end{prooftree}
\end{tiny}
\begin{tiny}
\begin{prooftree}
\AxiomC{$f(g(0)) < 1,0=f(g(0))\vdash (5)$} 
\AxiomC{$ C1(x,0,1)$} 
\AxiomC{$C4(x,y,0,1)$} 
\BinaryInfC{$f(g(0)) < 1\vdash 0= f(g(0)),  f(g(0))<0 $} 
\BinaryInfC{$f(g(0)) < 1\vdash  f(g(0))<0$} 
\AxiomC{$ C5(x,0,1) $} 
\BinaryInfC{$f(g(0)) < 1\vdash (C)$}
\end{prooftree}
\end{tiny}
\begin{tiny}
\begin{prooftree}
\AxiomC{$ C6(x,0,0) $} 
\AxiomC{$C2(x,0,0)$} 
\AxiomC{$C4'(x,y,0,0)$} 
\BinaryInfC{$f(0) < 1 \vdash 0= f(g(0)),  f(g(0))<0$} 
\BinaryInfC{$f(0) < 1 \vdash 0= f(g(0))$} 
\AxiomC{$C3(x,0,0)$} 
\BinaryInfC{$f(x(1,0)) < 1,0=f(x(0,0))\vdash(4)$} 
\end{prooftree}
\end{tiny}
\begin{tiny}
\begin{prooftree}
\AxiomC{$f(0) < 1,0=f(0)\vdash (4)$} 
\AxiomC{$C1(x,0,0)$} 
\AxiomC{$C4(x,y,0,0)$} 
\BinaryInfC{$f(0) < 1\vdash 0 = f(0),  f(0)<0 $} 
\BinaryInfC{$f(0) < 1\vdash  f(0)<0$} 
\AxiomC{$C5(x,0,0) $} 
\BinaryInfC{$f(0) < 1\vdash (E)$}
\end{prooftree}
\end{tiny}
\begin{tiny}
\begin{prooftree}
\AxiomC{$ f(g(g(0)))<1\vdash (D) $} 
\AxiomC{$C2(x,1,1)$} 
\AxiomC{$C4'(x,y,1,1)$} 
\BinaryInfC{$\begin{array}{c} f(g(0)) < 2\vdash\\ 1 = f(g(g(0))),  f(g(g(0)))<1 \end{array}$} 
\BinaryInfC{$f(g(0)) < 2\vdash 1 = f(g(g(0)))$} 
\AxiomC{$C3(x,1,1) $} 
\BinaryInfC{$f(g(0)) < 2,1 = f(g(0))\vdash (3) $} 
\end{prooftree}
\end{tiny}
\begin{tiny}
\begin{prooftree}
\AxiomC{$f(g(0)) < 2, 1 = f(g(0))\vdash (3) $} 
\AxiomC{$C1(x,1,1)$} 
\AxiomC{$C4(x,y,1,1)$} 
\BinaryInfC{$\begin{array}{c} f(g(0)) < 2 \vdash \\ 1 = f(g(0)),  f(g(0))<1 \end{array}$} 
\BinaryInfC{$ f(g(0)) < 2 \vdash  f(g(0))<1 $} 
\AxiomC{$ f(g(0)) < 1 \vdash (C)$} 
\BinaryInfC{$f(g(0)) < 2\vdash (B)$}
\end{prooftree}
\end{tiny}
\begin{tiny}
\begin{prooftree}
\AxiomC{$f(g(0))<1\vdash (C)$} 
\AxiomC{$C2(x,1,0)$} 
\AxiomC{$C4'(x,y,1,0)$} 
\BinaryInfC{$f(0) < 2\vdash 1 = f(g(0)),  f(g(0))<1$} 
\BinaryInfC{$f(0) < 2\vdash 1 = f(g(0))$} 
\AxiomC{$\begin{array}{c} C3(x,1,0)\vdash \end{array} $} 
\BinaryInfC{$f(0) < 2,1 = f(0)\vdash (2) $} 
\end{prooftree}
\end{tiny}
\begin{tiny}
\begin{prooftree}
\AxiomC{$f(0) < 2, 1 = f(0)\vdash (2) $} 
\AxiomC{$C1(x,1,0) $} 
\AxiomC{$ C4(x,y,1,0) $} 
\BinaryInfC{$ f(0) < 2 \vdash 1 = f(0),  f(0)<1 $} 
\BinaryInfC{$ f(0) < 2 \vdash  f(0)<1 $} 
\AxiomC{$ f(0) < 1 \vdash (E)$} 
\BinaryInfC{$f(0) < 2\vdash (A)$}
\end{prooftree}
\end{tiny}
\begin{tiny}
\begin{prooftree}
\AxiomC{$f(g(0))<2\vdash (B)$} 
\AxiomC{$C2(x,2,0)$} 
\AxiomC{$C7(x,2,1)$} 
\BinaryInfC{$\vdash 2 = f(g(0)),  f(g(0))<2$} 
\BinaryInfC{$\vdash 2 = f(g(0))$} 
\AxiomC{$C3(x,2,0)$} 
\BinaryInfC{$2=f(0)\vdash (1) $} 
\end{prooftree}
\end{tiny}
\begin{tiny}
\begin{prooftree}
\AxiomC{$2=f(0)\vdash (1) $} 
\AxiomC{$\vdash C1(x,2,0) $} 
\AxiomC{$C7(x,2,0)$} 
\BinaryInfC{$\vdash 2 = f(0),  f(0)<2$} 
\BinaryInfC{$\vdash f(0)<2 $} 
\AxiomC{$ f(0) < 2\vdash (A)$} 
\BinaryInfC{$\vdash $}
\end{prooftree}
\end{tiny}
\caption{Resolution refutation for instance $n=2$ and $k=0$ (Part A).}
\end{figure}
\end{document}